\colorlet{lred}{red!40}
\colorlet{lgreen}{green!40}
\colorlet{lblue}{blue!40}
\newcommand*{\N}{\mathds{N}}
\newcommand*{\R}{\mathds{R}}
\newcommand*{\C}{\mathds{C}}
\newcommand{\Ro}{\mathbf{R}}
\newcommand{\Mo}{\mathbf{M}}
\newcommand{\Bo}{\mathbf{B}}
\newcommand{\Do}{\mathbf{D}}
\newcommand{\Co}{\mathbf{C}}
\newcommand{\vv}{\mathbf{v}}
\newcommand{\ww}{\mathbf{w}}
\newcommand{\z}{\mathbf{z}}
\newcommand{\y}{\mathbf{y}}
\newcommand{\x}{\mathbf{x}}
\newcommand{\Base}{\boldsymbol \Psi}
\newcommand*{\transpose}[1]{{#1}^\intercal}
\newcommand*{\Int}[4]{\int_{#1}^{#2}\!{#3}\,\mathrm{d}{#4}}
\newcommand*{\e}{\mathrm{e}}
\theoremstyle{theorem}
\newtheorem{theorem}{Theorem}[section]
\theoremstyle{definition}
\newtheorem{definition}[theorem]{Definition}
\renewcommand{\phi}{\varphi}
\renewcommand{\epsilon}{\varepsilon}
\renewcommand{\theta}{\vartheta}
\DeclarePairedDelimiter{\abs}{\lvert}{\rvert}
\DeclarePairedDelimiter{\norm}{\lVert}{\rVert}
\DeclarePairedDelimiter{\innerprod}{\langle}{\rangle}
\newcommand{\F}{\mathbf{F}}
\DeclareMathOperator*{\mmin}{minimize}
\DeclareMathOperator*{\minimize}{minimize\,}
\numberwithin{equation}{section}
\numberwithin{figure}{section}
\colorlet{lred}{red!40}
\colorlet{lgreen}{green!40}
\colorlet{lblue}{blue!40}
\definecolor{mixc}{cmyk}{0.5,0.5,0.5,0}
\colorlet{mixl}{mixc!30}
\begin{document}

\title{Compressive Time-of-Flight 3D Imaging Using Block-Structured Sensing Matrices}

\author[1,$\star$]{Stephan~Antholzer}
\author[1]{Christoph~Wolf}
\author[1]{Michael ~Sandbichler}
\author[2]{Markus~ Dielacher}
\author[1,$\star$]{Markus~Haltmeier}

\affil[1]{Department of Mathematics, University of Innsbruck\authorcr
Technikerstra{\ss}e 13, 6020 Innsbruck, Austria\vspace{1em}}

\affil[2]{Infineon Technologies Austria\authorcr
              Babenbergerstra{\ss}e 10, 8020 Graz, Austria\vspace{1em}}

\affil[$\star$]{Correspondence: {\{stephan.antholzer,markus.haltmeier\}@uibk.ac.at}}

\date{December 22, 2018}

\maketitle

\begin{abstract}
Spatially and temporally  highly resolved depth information  enables numerous applications including human-machine interaction in
gaming or safety functions  in the  automotive industry.  In this paper, we address this issue  using Time-of-flight (ToF) 3D cameras which
are compact devices providing highly resolved depth information.  Practical restrictions often require to  reduce the  amount of data to be read-out and transmitted. Using standard ToF cameras, this can only be achieved by lowering the spatial or temporal
resolution. To overcome such a limitation,  we propose a compressive ToF camera  design using block-structured sensing matrices that allows to reduce the amount of data while keeping high
spatial and temporal resolution.   We propose  the use of efficient reconstruction algorithms  based on
$\ell_1$-minimization and TV-regularization. The reconstruction methods are applied to data captured by a real ToF camera system and evaluated in terms of reconstruction quality and
computational effort.  For both, $\ell_1$-minimization and TV-regularization,  we use
a local as well as a global reconstruction strategy. For all considered instances,
global TV-regularization  turns out to clearly perform best in terms of evaluation metrics
including the  PSNR.

\bigskip
\noindent \textbf{Keywords:}
Compressed sensing, Time-of-Flight Imaging,   3D Imaging,
sparse recovery, total variation, image reconstruction.
\end{abstract}

\section{Introduction}

 Time-of-Flight (ToF) camera systems rely on the time of flight (or travel time) of an emitted
 and reflected light  beam to create a depth image of a scenery. They offer
 many advantages over traditional systems (e.g. lidar) such as compact design,
registered depth and intensity images at a high frame rate, and low
power consumption~\cite{ToF-paper}.
This makes them ideal for mobile usage, for example, on a mobile phone. On such
devices, the computational resources for the required image reconstruction
algorithms are limited.
While there are several technologies allowing 3D imaging, in this paper we will focus on cameras that use a modulated
light source to calculate the phase shift (encoding the depth image)  between the emitted  and received signal~\cite{Hansard}.

High spatial and temporal resolution requires a  large amount of data to be read-out and transferred
from ToF cameras. In order to determine a depth image, typically four different  phase images per
frame have to be collected with the ToF camera. However, even from  four phase images,  the depth image
is unique only up to a certain maximal distance from the camera. To measure larger distances one needs additional  phase images that have to be read-out and transferred. Also in multi-camera systems, where the depth image is calculated outside the camera, the amount of data can be very high.
If the data rate or the read-out process is a limiting factor, either the spatial or the temporal resolution has to be reduced in a conventional ToF camera.

\begin{figure}[htb!]
\includegraphics[width=\columnwidth]{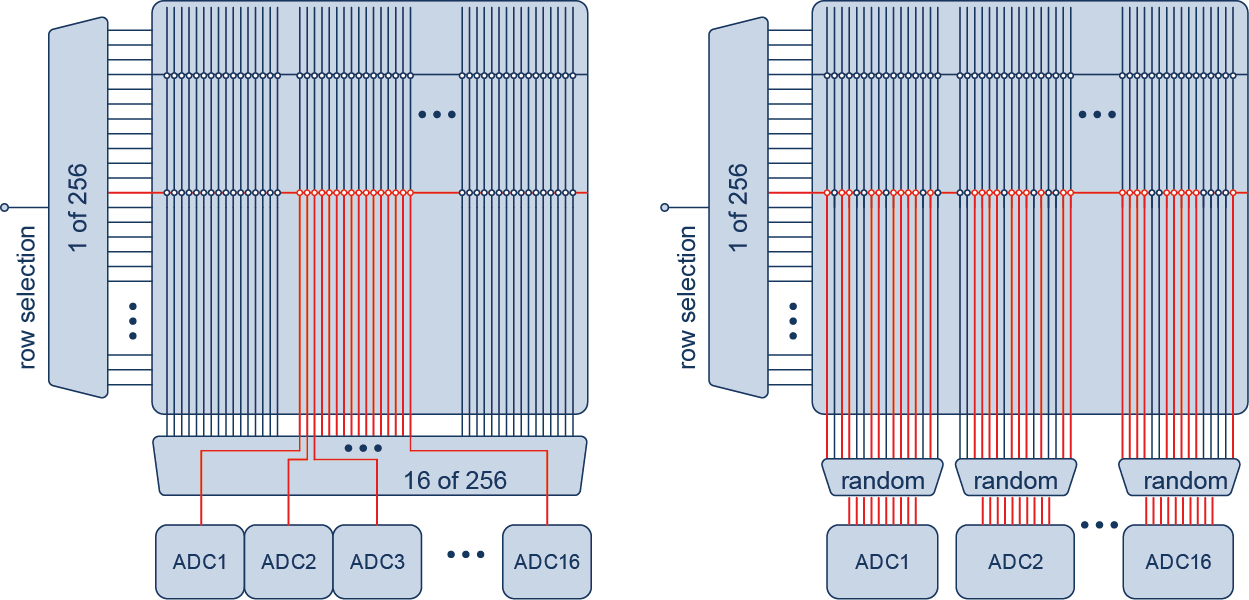}
\caption{\textbf{Standard read-out versus compressive read-out.} Left:
Standard sequential read-out using 16 ADCs used by our ToF camera.
Right: Proposed  compressive read-out. In each row, instead of 16 sequential read-outs per ADC,
$m/K$ combinations of pixel values are read-out. Note that compared to the standard  camera design, the 
main difference is the used multiplexer that combines the read-outs within a single block according the used 
measurement matrix.}  \label{fig:adc}

\end{figure}

\subsection*{Proposed compressive ToF imaging  approach}

To address the issues mentioned above, in this article we propose a compressive ToF camera that allows a reduced amount of data to be read-out and transferred  while preserving high spatial and temporal resolution.
Instead of individual pixels of the phase images, the compressive ToF camera reads out combinations of pixel values that are  transferred to an external processor.   As shown in Figure~{\ref{fig:adc}},  the only additional element that has to be added to the existing camera design  
is multiplexer per block, which combines the pixels read-out according to the used measurement design. Note that multiplexers are  standard element in CMOS (complementary metal-oxide-semiconductor) sensors \cite{zimmermann1997low}. Therefore, the proposed  camera
design  only requires  small modification of the existing camera design shown in Figure~{\ref{fig:adc}}, left. As in the existing used ToF camera we only use combinations of elements in the same row which yields  to block-structured sensing matrices. The actual manufacturing of such a compressive camera design  is beyond the scope of this paper. Note that by  using modern multi-layer circuit routing technology {\cite{katic2015compressive}}, one could also use combinations from different overlapping routing paths. We  restrict our ourselves to the block-structured sensing matrices because they are compatible with the existing ToF camera design.

In order to  reconstruct the  original phase and depth images we use techniques from sparse recovery using $\ell_1$-minimization and total variation (TV)-regularization. For both methods
we implemented a block based local approach  as well  as global approach. In all instances the
global TV-regularization turns out tot outperform the other tested reconstruction methods  in terms of
RMAE (relative mean absolute error), PSNR (peak signal to noise ratio) as well as visual  inspection. For example, using a compression ratio of 4.7,
global TV-regularization yields a PSNR of 31.5 for the recovered depth image of a typical
scenery, opposed to a PSNR of 26.4, 28.0 and 27.3  for block $\ell^1$-minimization,  global $\ell^1$-minimization  and block  TV-regularization. We address this to the following  issue.
Depth images and phase will typically be piecewise homogeneous.  Hence the
depth images  have  sparse  gradients  which exactly is what TV-regularization tries
to recover.

We point out that the proposed compressed sensing framework aims to accelerate the camera read-out and  not to reduce the  number of sensing  pixels itself, which would be another interesting line of research.
Different compressive ToF camera designs have been proposed
in~\cite{other-approach,conde2017compressive,Kadambi2015Lidar3D,Howland2011Lidar3D}.
The compressive designs in \cite{other-approach,Howland2011Lidar3D}
use a spatial light modulator and multiple pulses, whereas \cite{Kadambi2015Lidar3D} uses
coded aperture to gather multiple measurements.
In \cite{conde2017compressive}, compressed sensing ToF  imaging has been studied
in the spatial and temporal domain.
All these works use unstructured sensing matrices. In contrast to that, we use block-structured
sensing matrices which are easier to implement in existing ToF camera designs.
Additionally, the reduction of read-outs allows high spatial and temporal resolution.

Some  results  of this paper have  been presented at the International Conference on
Sampling Theory and Applications (SampTA) 2017  in Tallinn~\cite{antholzer2017compressive}.
The present  paper extends the  theoretical recovery results  stated in  \cite{antholzer2017compressive}
from the standard basis to a general  sparsifying  basis. Additionally, all numerical studies presented in this paper are new and extended significantly
compared to~\cite{antholzer2017compressive}. In particular, the TV-regularization studies for the proposed ToF  compressed sensing scheme
are completely new.

\subsection*{Outline}

In Section~\ref{sec:TOF} we give a short introduction the ToF imaging.
In Section~\ref{sec:CSTOF}  we present the type of measurements
that we propose for compressive ToF imaging.
We thereby start with details on the  classical (non-compressive) and the new (compressive) designs.
 Additionally, we prove that the used matrices
fulfill the RIP under suitable conditions. Moreover, in  Section~{\ref{sec:CSTOF}} we also
 introduce the   proposed block based   image reconstruction  approach using
 $\ell^1$-minimization and total variation. In the special cases of  single blocks, we obtain the global counterparts.  In Section \ref{sec:results} we give details on the numerical algorithm and
present extensive studies of our  two-step reconstruction approach of recovering the
 depth  image from the compressed measurements. We compare the block-based  as well as
 global version for $\ell^1$-minimization and TV-regularization.  The results are evaluated 
 in terms of  RMAE and  PSNR. Visually as well as in terms of these quality measures  global
 TV-regularization  outperforms the other methods in all instances.

 \begin{figure}[htb!]
     \centering
    \includegraphics[width=\columnwidth]{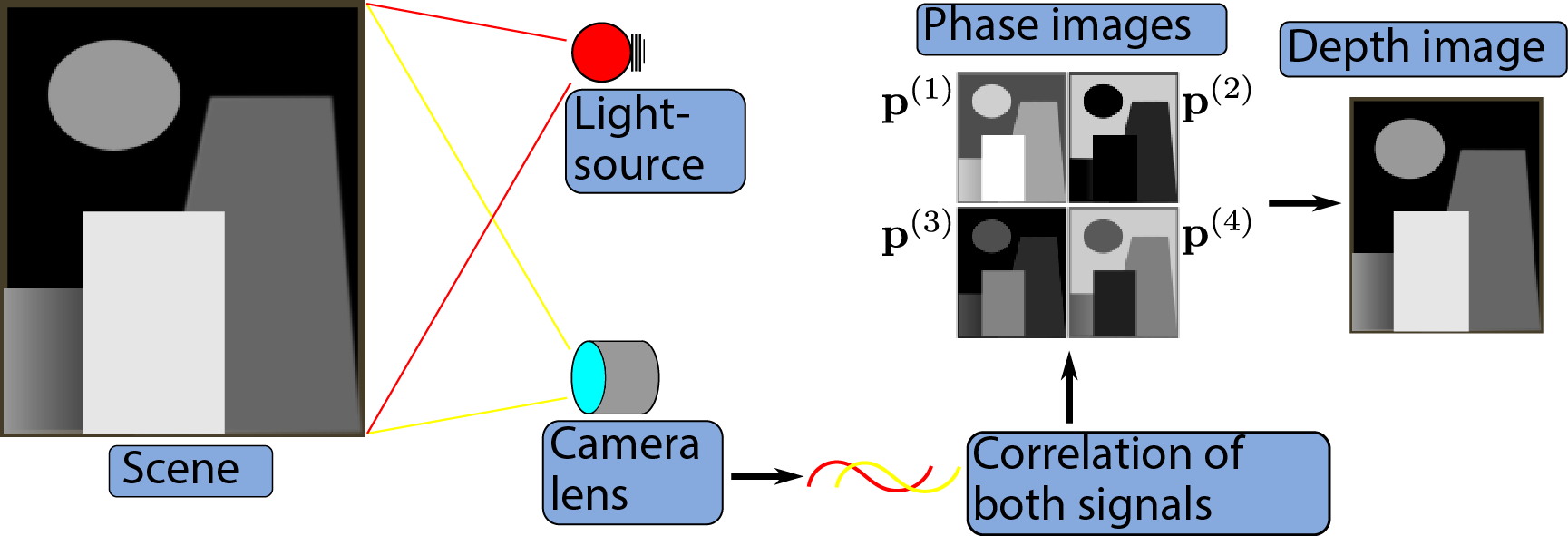}
    \caption{\textbf{Basic principles of a ToF camera}.
        Phase images are collected  by  sampling the cross-correlation of the emitted  with the
        reflected light pulse.
        From these phase images one computes a depth image mapping the distance from the scene to
    the camera.}
    \label{fig:ToFSetup}
\end{figure}

\section{Basics of 3D  imaging using ToF cameras}
\label{sec:TOF}

A ToF camera measures the distance of a scenery to the camera. By sending out a diffuse light pulse and
measuring the reflected signal, the camera is able to record depth information of the entire
scenery at once.
To acquire depth information, the sent out light is modulated and can be generated by an LED\@.
The scenery reflects the light which is recorded by the camera as depicted in Figure~\ref{fig:ToFSetup}. The emitted pulse can
be modeled as a time-dependent function $g(t) = C \cos(\omega t)$, where
$C$ is the amplitude, $\omega$ the modulation frequency
(or carrier frequency), and $t$ the time variable. The signal is reflected,
and the camera receives,  for any individual pixel $i \in \{1, \ldots , n\} $,
a phase and amplitude shifted signal
\begin{equation*}
f_i(t) = B_i + A_i \cos(\omega t - \phi_i) \,.
\end{equation*}
Here $\phi_i$ is the phase shift depending on the distance $d_i$
between the camera and the scene mapped at pixel $i$,  $A_i$ the amplitude depending on the
reflectivity,
and $B_i$ an offset.
The phase shift is related to the distance $d_i$
 via the relation  $d_i  = \phi_i \;  c / (2\omega)$.

At each pixel of the ToF camera, the cross-correlation between the reference  and the reflected
signal is measured, where the cross-correlation between two signals $f\colon\R\to\R$ and
$g\colon\R\to\R$ is given by
\begin{equation}\label{eq:cross_correlation}
    c_{f,g}(s) = \lim_{T\to\infty} \frac{1}{2T}\Int{-T}{T}{f(t)g(t+s)}{t}\,.
\end{equation}
In our case, $c_{f,g}(\cdot)$ can be calculated analytically~\cite{Albrecht,Lange,Hansard} which yields
\begin{equation}\label{eq:our_cross_correletation}
    c_{f_i,g}(s)
 = \frac{A_i C}{2}\cos(\omega s + \phi_i)  + K_i \,.
\end{equation}
Here $K_i$  incorporates constants accounting for noise and the background
image generated by ambient light.
By sampling the cross-correlation function at the sampling points
$s \in \{ 0, \pi/(2\omega),\pi/\omega, 3\pi/(2\omega)\}$ we get
four so-called phase images
\begin{align*}
    \mathbf{p}^{(1)} &= \phantom{-}\frac{\mathbf{A}C}{2}\cos(\boldsymbol{\phi}) + \mathbf{K}
    \\
    \mathbf{p}^{(2)} &=- \frac{\mathbf{A}C}{2}\sin(\boldsymbol{\phi}) + \mathbf{K}\\
    \mathbf{p}^{(3)} &= -\frac{\mathbf{A}C}{2}\cos(\boldsymbol{\phi}) + \mathbf{K}
    \\\mathbf{p}^{(4)}     &= \phantom{-} \frac{\mathbf{A}C}{2}\sin(\boldsymbol{\phi}) + \mathbf{K}
    \,.
\end{align*}
Here we have set $\boldsymbol{\phi}  \triangleq (\phi_j)_{j=1}^n \in \R^n$, $\mathbf{K}  \triangleq
(K_j)_{j=1}^n$ and $ \mathbf{A} \triangleq (A_j)_{j=1}^n  \in \R^n$,  and all   operations are taken  point-wise.
Under the common  assumption that $K_i$ is independent  of the
pixel location we can estimate the phase shifts $\boldsymbol \phi$ by
\begin{equation}\label{eq:phase}
\hat{\boldsymbol{\phi}} = \operatorname{\arg} \left(\mathbf{p}^{(1)}-\mathbf{p}^{(3)} + i \bigl(
\mathbf{p}^{(4)}-\mathbf{p}^{(2)} \bigr) \right)   \,.
\end{equation}
Here  $\alpha = \operatorname{\arg} (z) \in [0, 2\pi)$ denotes the argument of the complex number
$z$ defined by $z  \triangleq  r e^{i \alpha}$.    In particular, the depth image is given by
$\hat{\mathbf{d}} = \hat{\boldsymbol{\phi}} \; c/(2\omega)$.

Since the phase shifts   are contained  in $[0,2\pi)$, the maximal distance that can be found by
\eqref{eq:phase}   is $d_{\max} = (c /\omega) \pi$. Larger distances are falsely identified, taking
values in the interval
$[0,d_{\max})$.        To overcome  this ambiguity, several methods  have been proposed in the
literature (see, for example, ~\cite{Hansard,Droeschel}).  One such approach  consists in
capturing two sets of phase images with different  modulation frequencies $\omega_1 \neq \omega_2$,
and then comparing the two depth images. In this paper, we will not address the ambiguity problem further.
The   compressive ToF camera that we propose below  can be extended  to multiple modulation frequencies in a straightforward manner.
Sparse recovery can be applied to any of the phase images, even if they are affected by phase wrapping.
Further research, however, is needed to thoroughly investigate such  extensions,
in particular, to investigate sparsity issues, accuracy  and noise stability.

\section{Compressive ToF sensing and image reconstruction}
\label{sec:CSTOF}

In this section we present the  proposed   compressive ToF 3D sensing design
compatible  with existing ToF cameras. Additionally, we describe an efficient
block-wise reconstruction procedure based on sparse recovery.

\subsection{Compressive ToF sensing}
\label{sec:read-out}

As mentioned in the introduction, in a conventional ToF camera, all  pixel values of all phase images  have to be read-out
and large amounts of data have to be transferred. To reduce the amount of data, in this paper, we propose a
compressive ToF camera, which reads out and transmits linear combinations instead of individual pixel values of the phase image.
Our proposed compressive ToF camera design is based on the existing non-compressive ToF camera
design, which should allow to engineer and build the new camera with low effort.
The only difference between the two designs is in the way the pixels of the sensors are read-out.
For the compressive ToF camera, we propose to read-out linear combinations of neighboring
pixels.

The data collected by the compressive ToF camera can be written in the form
\begin{equation*}
\forall i \in \{1,2,3,4\} \colon \quad
\mathbf{y}^{(i)}
=
\Mo \mathbf{p}^{(i)} \,.
\end{equation*}
Here $\Mo \in\R^{m\times n}$ is the measurement  matrix,
$\mathbf{p}^{(i)}\in\R^n$ are the phase images and $\mathbf{y}^{(i)}\in\R^m$ the read-out data with $m \ll n $.  To reconstruct the depth image from the compressed read-outs we propose the  following two-step procedure: First, we  estimate  the differences
$\mathbf{p}^{(1)} - \mathbf{p}^{(3)}$ and $\mathbf{p}^{(4)}-\mathbf{p}^{(2)}$ from
\begin{align} \label{eq:diff1}
 \mathbf{y}^{(1)} - \mathbf{y}^{(3)}
& = \Mo  \left( \mathbf{p}^{(1)} - \mathbf{p}^{(3)}  \right) \,,
\\ \label{eq:diff2}
\mathbf{y}^{(4)}-\mathbf{y}^{(2)}
& = \Mo  \left( \mathbf{p}^{(4)}-\mathbf{p}^{(2)}  \right) \,,
 \end{align}
using sparse recovery. In a second step we recover the depth image by applying~\eqref{eq:phase} to the estimated differences.

Any of the equations \eqref{eq:diff1}, \eqref{eq:diff2}
is an   underdetermined system  of the form $\mathbf{y} =\Mo \mathbf{x}$, for which in general no unique solution exists.  To obtain solution uniqueness, the vector $\mathbf{x} \in \R^n$ needs to satisfy certain additional requirements.
In recent years, sparsity turned out to be a powerful property for this purpose. Recall that  $\mathbf{x}$ is called $s$-sparse, if it has at most
$s$ nonzero entries. Assuming sparsity,  the vector $\mathbf{x}$ can, for example, be
recovered by solving the $\ell_1$-minimization problem
\begin{equation}\label{eq:ell1}
\mmin_{\mathbf{z}\in\R^n}\norm{\mathbf{z}}_1 \quad \text{subject to } \Mo \mathbf{z} = \mathbf{y} \,.
\end{equation}
In order for \eqref{eq:ell1} to uniquely recover $\mathbf{x}$,
the matrix $\Mo$ needs to fulfill certain properties.
One sufficient condition is the restricted isometry property (RIP).
The matrix $\Mo$ is said to satisfy the  $s$-RIP with
constant $\delta>0$, if
\begin{equation*}
(1-\delta)\norm{\mathbf{z}}_2^2 \leq \norm{\Mo\mathbf{z}}_2^2 \leq (1+\delta)\norm{\mathbf{z}}_2^2
\end{equation*}
holds for all $s$-sparse $\mathbf{z} \in \R^n$. If the $s$-RIP
constant is sufficiently small, then \eqref{eq:ell1} uniquely recovers
any sufficiently sparse vector  (see, for example, ~\cite{CanRomTao06b,CS,cai2014sparse}).

Although some results for deterministic RIP matrices exist~\cite{simple_det_mm,explicit_RIP},  matrices satisfying the RIP are commonly constructed in some random manner. Realizations of
Gaussian or Rademacher random matrices  are known to satisfy the RIP with high probability~\cite{CS}.
Rademacher random variables take
the values -1 and 1 with equal probability. It has been
shown \cite{candes2006near} that if $m\geq C \delta^{-2} s\log(n/s)$, then $\delta_s\leq \delta$
with high probability for both types of matrices.
Note that up to the logarithmic factor, this bound  scales linearly in the sparsity level $s$.
In this sense, the bound is optimal in the sparsity level, because $m \geq 2s$ is the
minimal requirement  for any measurement matrix to be able to recover all $s$-sparse vectors~{\cite{CS}}.
More generally, all
matrices with independent sub-Gaussian entries satisfy the RIP with high probability. A sub-Gaussian random variable
$X$ is defined by the property
\begin{equation}\label{eq:subgauss}
    \mathds{P}(\abs{X}\geq t) \leq \beta \e^{-\kappa t^2} \quad \text{for all } t>0
\end{equation}
with constants $\beta,\kappa>0$. It is easy to show that Rademacher and Gaussian random variables are
sub-Gaussian. Sub-Gaussian matrices $\Mo$ are also universal~\cite{CS} which means that for any unitary
matrix $\Base\in\C^{n\times n}$ the matrix $\Mo\Base$ also satisfies the RIP\@. Thus one can also
recover signals that are not sparse in the standard basis but for which $\Base^\ast \x$
is sparse, where $\Base^\ast$ is the conjugate transpose. This property is very useful in
applications
since many natural signals have sparse representations in certain bases different from the standard
basis. In general, if the
restricted isometry constant of
$\Mo\Base$ is small, then we can recover the signal by solving \eqref{eq:ell1} with
$\Mo\Base$ instead of $\Mo$ and, in the noisy case, by
\begin{equation}\label{eq:tikhonov}
    \minimize_{\z\in\C^n} \norm{\Base^\ast \z}_1 \quad \text{such that } \norm{\Mo\z-\y}_2\leq \eta
    \,.
\end{equation}
Here $\eta$ equals the noise level of the measurements.
Similar results hold if $\Base$ is a
frame (see \cite{CanEldNeeRan11,Hal13b,redundant_dictionaries,RauSchVan08}).

In practical applications, such unstructured matrices 
cannot always be used. Either there are restrictions on
the matrix preventing us from using a random matrix with i.i.d. entries or
the storage space is
limited, such that storing a full matrix would be too expensive.
There are different methods for
constructing structured compressed sensing matrices that satisfy the RIP. For example, such
matrices can be constructed by random subsampling of an orthonormal matrix~\cite[Chapter
12]{CS} or
deterministic convolution followed by random subsampling~\cite{li2013convolutional} or
using a random convolution followed by deterministic subsampling~\cite{rauhut2012restricted}. In
the next section we will examine the latter type since its application to ToF imaging and the
existing camera designs   they have beneficial properties. More specifically, such measurement matrices  can be constructed to only use  entries form  a small set, such as $\{-1,0 ,1\}$,  
can efficiently be implemented  by using  Fourier transform techniques, and 
require litte  information to be stored.

\subsection{Compressive 3D Sensing Using Block Partial Circulant  Matrices}
\label{ssec:circulant}

The hardware requirements in our case prevent us from using arbitrary matrices since for the
analog-to-digital converters
(ADC) the weights 0 and $\pm a$ for some fixed constant  $a \in\R$
should be used. Further,  any individual ADC can only be wired with a limited number of pixels
(compare Figure~\ref{fig:adc}) which imposes a particular block-structure  of the measurement matrix.
Thus, the  measurement matrices that we use in our approach take the block-diagonal form
\begin{equation} \label{eq:block}
\Mo =\Mo_1 \otimes \cdots \otimes \Mo_K
\triangleq
\begin{pmatrix}
\Mo_1 & \mathbf{0} & \cdots & \mathbf{0} \\
\mathbf{0} & \Mo_2 & \cdots & \mathbf{0}\\
\vdots & & \ddots & \vdots \\
\mathbf{0} & \cdots & & \Mo_K
\end{pmatrix}\,.
\end{equation}
Here, each sub-matrix $\Mo_k \in \R^{m_k \times n_k}$ operates on a certain subset $\Omega_k \subsetneq \{1,\ldots,  n\}$ with $n_k=\abs{\Omega_k}$ elements coming from a single row in the image.  For simplicity we consider the case that
$n_k  = n/K$ and  $m_k  = m/K$ for each $k$.
The particular measurements in each row block are constructed in a certain
random manner satisfying the requirements above.

A particularly useful class of such row-wise measurements in the compressive ToF camera
can be modeled by  partial circulant  matrices.  A
circulant matrix $\Co_\vv \in \R^{n\times n}$ associated with $\vv =(v_1, \ldots,  v_n) \in  \R^n$ is defined  by
\begin{equation*}
\forall i,j \in \{1, \dots, n\} \colon \quad
	(\Co_\vv) _{i,j} = v_{i\ominus j} \,,
\end{equation*}
where $j\ominus i \triangleq  (j-i) \mod n$ is the cyclic subtraction.
In particular, for all $\vv, \ww\in\R^n$, we have
$\Co_\vv \ww = \vv \ast \ww$, where
$(\vv \ast \ww)_j \triangleq \sum_{i=1}^n  v_{j\ominus i} w_i$ is
 the circular convolution.  For any subset $\Omega \subseteq \{1,\ldots , n\}$,
 the projection matrix $\Ro_\Omega  \in \R^{|\Omega|\times n} $  is defined
 by $\Ro_\Omega \vv \triangleq (\vv_i)_{i\in \Omega}$.

\begin{definition}
The  partial circulant  matrix  associated to  $\vv \in \R^n$  and
 $\Omega \subseteq \{1,\ldots, n\}$ is defined by
 $\frac{1}{  \sqrt{\abs{\Omega}} } \Ro_\Omega \Co_{\vv}$.
\end{definition}

Further, recall that a random vector $\vv$  with values in $\{\pm 1\}^n$ is called a
Rademacher vector if it has independent entries taking the values  $\pm 1$
with equal probability.
Partial circulant matrices satisfy the RIP. Such results have been obtained first
in~\cite{rauhut2012restricted} and have later been refined in~\cite[Theorem~1.1]{suprema} using the theory
of suprema of chaos processes. These results have been formulated for sparsity in the standard basis.
For our purpose, we formulate such a result  for the general orthonormal bases.

\begin{theorem} \label{thm:cyclic}
    Consider the partial circulant  matrix  $\Mo = \frac{1}{  \sqrt{m} } \Ro_\Omega \Co_{\vv}$
    associated to a  vector $\vv$ with iid sub-Gaussian entries and  a subset
    $\Omega \subseteq \{1,\ldots , n\}$ containing $m$ elements. If, for some $s\leq n$
    and $\delta \in (0,1)$, we have
    \[
        m \geq C \delta^{-2}\mu^2s(\log s)^2(\log n)^2 \,,
    \]
    then, with probability at least $1-n^{-\log n \log^2s}$, the $s$-RIP
    constant of $\Mo\Base$ is at most $\delta$, where the constant $\mu$ is given by
    $\mu = \max_{i,j}\abs{\innerprod{\F_{j-},\Base_{i-}}}$. Here $\F$ is the discrete Fourier
    matrix and $\Base \in \C^{n \times n}$ is any unitary matrix.
    \end{theorem}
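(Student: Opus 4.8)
The plan is to deduce the statement from the general bound on suprema of chaos processes of Krahmer, Mendelson and Rauhut~\cite{suprema} (the standard-basis partial circulant RIP of~\cite[Theorem~1.1]{suprema} being the case $\Base=\id$, $\mu=1$), by writing $\Mo\Base$ as a linear image of $\vv$ and re-running the geometric estimates with the coherence $\mu$ tracked. First I would use commutativity of the circular convolution: for every $\z\in\C^n$,
\[
\Mo\Base\z=\frac{1}{\sqrt m}\Ro_\Omega\bigl(\vv\ast(\Base\z)\bigr)=\frac{1}{\sqrt m}\Ro_\Omega\Co_{\Base\z}\vv=\A_\z\vv,
\qquad\text{where }\ \A_\z\triangleq\frac{1}{\sqrt m}\Ro_\Omega\Co_{\Base\z}\in\C^{m\times n}
\]
depends linearly on $\z$. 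Each row of $\Ro_\Omega\Co_{\Base\z}$ is a cyclic rearrangement of the entries of $\Base\z$, and $\vv$ has i.i.d.\ mean-zero unit-variance entries, so $\expec\norm{\A_\z\vv}_2^2=\norm{\Base\z}_2^2=\norm{\z}_2^2$. Hence, with $D_s$ the set of $s$-sparse unit vectors in $\C^n$ and $\mathcal A=\{\A_\z:\z\in D_s\}$, the $s$-RIP constant of $\Mo\Base$ equals $\sup_{\z\in D_s}\bigl|\norm{\A_\z\vv}_2^2-\expec\norm{\A_\z\vv}_2^2\bigr|$ (a real quadratic form in the real vector $\vv$), which is precisely what~\cite[Theorem~3.1]{suprema} bounds in terms of $d_F(\mathcal A)=\sup_{A\in\mathcal A}\norm{A}_F$, $d_{2\to2}(\mathcal A)=\sup_{A\in\mathcal A}\norm{A}_{2\to2}$ and Talagrand's functional $\gamma_2(\mathcal A,\norm{\cdot}_{2\to2})$.

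Next I would estimate these three quantities. Each of the $m$ rows of $\A_\z$ has squared norm $\norm{\Base\z}_2^2/m=1/m$, so $d_F(\mathcal A)=1$. Diagonalizing the circulant, $\Co_{\Base\z}=\tfrac1n\F^{\ast}\operatorname{diag}(\F\Base\z)\F$ (with $\F$ the un-normalized DFT), and using that $\tfrac1{\sqrt n}\F$ is unitary while $\Ro_\Omega$ is a coordinate projection, one gets $\norm{\A_\z}_{2\to2}\le\tfrac1{\sqrt m}\norm{\Co_{\Base\z}}_{2\to2}=\tfrac1{\sqrt m}\norm{\F\Base\z}_\infty$. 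By the definition of $\mu$ and Cauchy--Schwarz over the support of $\z$, $\abs{(\F\Base\z)_j}\le\mu\sqrt s\,\norm{\z}_2$ for $\z\in D_s$, hence $d_{2\to2}(\mathcal A)\le\mu\sqrt{s/m}$.

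The main work is the $\gamma_2$ estimate. By linearity $\A_\z-\A_{\z'}=\A_{\z-\z'}$, and the operator-norm bound above gives $\norm{\A_\z-\A_{\z'}}_{2\to2}\le\tfrac1{\sqrt m}\norm{\F\Base(\z-\z')}_\infty$; thus $\gamma_2(\mathcal A,\norm{\cdot}_{2\to2})$ is controlled by the $\gamma_2$ functional of $D_s$ for the seminorm $\z\mapsto\tfrac1{\sqrt m}\norm{\F\Base\z}_\infty$. I would bound the latter through Dudley's entropy integral and the standard two-regime covering estimate for $D_s$: at small scales a volumetric bound on each of the $\binom{n}{s}$ coordinate subspaces, rescaled by $\abs{(\F\Base\z)_j}\le\mu\sqrt s\norm{\z}_2$; at larger scales the empirical method of Maurey, which uses exactly that every column of $\F\Base$ has sup-norm at most $\mu$ and that one maximizes over the $n$ Fourier frequencies (this is where the $\log n$ factor enters). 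This gives $\gamma_2(\mathcal A,\norm{\cdot}_{2\to2})\lesssim\mu\sqrt{s/m}\,(\log s)(\log n)$, which for $\Base=\id$, $\mu=1$ reduces to the circulant estimate of~\cite{rauhut2012restricted,suprema}.

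Feeding $d_F=1$, $d_{2\to2}\lesssim\mu\sqrt{s/m}$ and $\gamma_2\lesssim\mu\sqrt{s/m}(\log s)(\log n)$ into the chaos bound of~\cite{suprema} yields
\[
\expec\sup_{\z\in D_s}\bigl|\norm{\A_\z\vv}_2^2-\norm{\z}_2^2\bigr|\ \lesssim\ \frac{\mu^2 s}{m}(\log s)^2(\log n)^2+\mu\sqrt{\frac sm}\,(\log s)(\log n),
\]
so for $m\ge C\delta^{-2}\mu^2 s(\log s)^2(\log n)^2$ with $C$ large the right-hand side is at most $\delta/2$; applying the companion tail inequality of~\cite{suprema} at deviation level of order $\delta$ (again with $C$ large enough) then upgrades this to the asserted probability $1-n^{-\log n\log^2 s}$. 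In short, the argument is the usual partial-circulant RIP proof of~\cite{suprema} with the single change that the Fourier/$\Base$ incoherence enters only through $\mu=\max_{i,j}\abs{\innerprod{\F_{j-},\Base_{i-}}}$. The step I expect to be the real obstacle is the covering-number bound behind the $\gamma_2$ estimate: obtaining the correct powers of $\log s$, $\log n$ and a clean proportionality to $\mu$ requires the delicate two-scale chaining of~\cite{suprema}, whereas the rest is bookkeeping.
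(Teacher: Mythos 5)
Your proposal is correct and follows exactly the route the paper relies on: the paper's proof is a one-line deferral to the chaos-process argument of Krahmer--Mendelson--Rauhut (carried out for general $\Base$ in the cited theses), and your sketch — rewriting $\Mo\Base\z=\A_\z\vv$, computing $d_F$, $d_{2\to2}$ and $\gamma_2$ with the coherence $\mu$ tracked, then applying the expectation and tail bounds of that paper — is precisely that analogous argument. You also correctly identify the covering-number/$\gamma_2$ estimate as the only step requiring real work beyond bookkeeping.
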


\begin{proof}
For the case that $\Base$ is the identity matrix the result is derived in the original paper~\cite{suprema}.
The generalization to arbitrary $\Base$ can be shown analogously to the original results.
Such a proof is worked out in~\cite{antholzer2017nonlinear,james}.
\end{proof}

Theorem~\ref{thm:cyclic} shows that  random partial circulant  matrices yield stable recovery of sparse vectors using~\eqref{eq:ell1}.
Recall that the proposed compressive  ToF camera read-out uses block diagonal measurement matrices
of  the form \eqref{eq:block}. Taking each block as a random partial circulant matrix and applying Theorem~\ref{thm:cyclic} yields the following result.

\begin{theorem}\label{thm:ourtheorem}
Let $\Mo\in\R^{m\times n}$ be of the form \eqref{eq:block}, where each block on the diagonal is a partial circulant  matrix  $\Mo_k = \frac{1}{\sqrt{m_k}} \Ro_{\Omega_k} \Co_{\vv_k}$ associated with independent   Rademacher vectors $\vv_k$  and   subsets $\Omega_k \subseteq \{1,\ldots , n/K\}$ having $m_k=m/K$ elements that are selected independently and uniformly at random.
If, for some $s\in \N$  and $\delta\in (0,1)$, we have
\[
m \geq K C \delta^{-2}  \mu^2 s (\log  s)^2 (\log(n/K))^2 \,,
\]
then, with probability at least $(1-(n/K)^{-\log (n/K)\log^2 s})^K$,
$\Mo_k \Base$ has the $s$-RIP constant of at most
$\delta$ for all $k=1,\ldots, K$. Here the constant $\mu$ is given by
    $\mu = \max_{i,j}\abs{\innerprod{\F_{j-},\Base_{i-}}}$ and
     $\Base \in \C^{(n/K) \times (n/K)}$ is any unitary matrix.
\end{theorem}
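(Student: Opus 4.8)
The plan is to reduce Theorem~\ref{thm:ourtheorem} to $K$ independent applications of Theorem~\ref{thm:cyclic}, one for each diagonal block, and then combine the individual success events using independence.

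First I would fix $k \in \{1,\ldots,K\}$ and look at the single block $\Mo_k = \frac{1}{\sqrt{m_k}} \Ro_{\Omega_k} \Co_{\vv_k}$. This is exactly a partial circulant matrix of the type treated in Theorem~\ref{thm:cyclic}, but with ambient dimension $n/K$ instead of $n$: the vector $\vv_k$ has iid Rademacher (hence iid sub-Gaussian) entries, and $\Omega_k \subseteq \{1,\ldots,n/K\}$ has $m_k = m/K$ elements. The only mild point to check is that Theorem~\ref{thm:cyclic}, as stated, allows $\Omega$ to be an arbitrary $m$-element subset, whereas here $\Omega_k$ is chosen uniformly at random; since the conclusion of Theorem~\ref{thm:cyclic} holds for every fixed admissible $\Omega$, it holds in particular conditionally on any realization of $\Omega_k$, and hence unconditionally. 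Applying Theorem~\ref{thm:cyclic} with the pair $(m_k, n/K)$ in place of $(m,n)$, I get: if
\[
m_k \;\geq\; C\,\delta^{-2}\,\mu^2\, s\,(\log s)^2\,(\log(n/K))^2 \,,
\]
then with probability at least $1 - (n/K)^{-\log(n/K)\log^2 s}$ the matrix $\Mo_k\Base$ has $s$-RIP constant at most $\delta$, where $\mu = \max_{i,j}\abs{\innerprod{\F_{j-},\Base_{i-}}}$ with $\F$ the $(n/K)\times(n/K)$ Fourier matrix and $\Base\in\C^{(n/K)\times(n/K)}$ unitary. Note that the hypothesis $m \geq KC\delta^{-2}\mu^2 s(\log s)^2(\log(n/K))^2$ of Theorem~\ref{thm:ourtheorem} is, after dividing by $K$ and using $m_k = m/K$, precisely this per-block condition, so the hypothesis of Theorem~\ref{thm:cyclic} is met for every $k$.

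Next I would denote by $E_k$ the event that $\Mo_k\Base$ has $s$-RIP constant at most $\delta$, so that $\mathds{P}(E_k) \geq 1 - (n/K)^{-\log(n/K)\log^2 s}$ for each $k$. The blocks depend on the data only through the pairs $(\vv_k,\Omega_k)$, and by hypothesis the Rademacher vectors $\vv_1,\ldots,\vv_K$ are independent and the random subsets $\Omega_1,\ldots,\Omega_K$ are selected independently; hence the events $E_1,\ldots,E_K$ are mutually independent. Therefore
\[
\mathds{P}\Bigl(\bigcap_{k=1}^{K} E_k\Bigr) \;=\; \prod_{k=1}^{K}\mathds{P}(E_k) \;\geq\; \bigl(1-(n/K)^{-\log(n/K)\log^2 s}\bigr)^{K} \,,
\]
which is exactly the claimed probability bound, and on this event $\Mo_k\Base$ has $s$-RIP constant at most $\delta$ for all $k = 1,\ldots,K$. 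This completes the proof.

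I do not expect any genuine obstacle here — the argument is essentially bookkeeping plus one use of independence. The only place warranting a sentence of care is the transfer of Theorem~\ref{thm:cyclic} from a fixed $\Omega$ to a random $\Omega_k$ (handled by conditioning, as above), and making sure the dimensions in the definition of $\mu$ refer consistently to the block size $n/K$ rather than to $n$. One could additionally remark — though it is not needed for the statement — that the union bound gives the cruder lower bound $1 - K(n/K)^{-\log(n/K)\log^2 s}$, which is often the more convenient form in applications.
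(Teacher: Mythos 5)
Your proposal is correct and follows essentially the same route as the paper's own proof: apply Theorem~\ref{thm:cyclic} blockwise with $(m,n)$ replaced by $(m/K,\,n/K)$, then use the independence of the generating Rademacher vectors to multiply the per-block success probabilities. The extra remarks on conditioning over the random $\Omega_k$ and on the union-bound alternative are sensible additions but do not change the argument.
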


\begin{proof}
As $m/K \geq C\delta^{-2} s(\log  s)^2(\log(n/K))^2$,
we can apply Theorem~\ref{thm:cyclic} with
$n$ and $m$ replaced by $n/K$ and $m/K$  to each block. Thus the restricted isometry constant of each block is at most $\delta$ with probability at least $1-(n/K)^{-\log (n/K) \log^2 s}$. As the generating Rademacher vectors for  each block are independent, the  $s$-RIP constants of all blocks  are uniformly bounded
by $\delta$ with probability at least $(1-(n/K)^{-\log (n/K)\log^2 s})^K$.
\end{proof}

Theorem~\ref{thm:ourtheorem} yields stable recovery via \eqref{eq:tikhonov} if the vector
$\mathbf{x}$ is $\Base$-block sparse, meaning that $\mathbf{x} =
\left[\mathbf{x}_1,\ldots,\mathbf{x}_K\right]$ with  $\Base^\ast \mathbf{x}_k \in \R^{n/K}$ being $s$ sparse for all $k=1,\ldots,K$.

Note that a different compressive CMOS sensor design using partial circulant matrices has been proposed in~{\cite{CMOS2009}}. Our design uses multiple ADCs (actually,  the existing camera design suggests 16 ADCs) each of them  operates on a small number of pixels. Thus our design  uses   parallel read-out which should yield  to a faster imaging image acquisition speed.
However, at the same time, the structured read-out matrices  require an increased number of measurement.  Investigations  on the good choices of block sizes and comparison with other  CMOS sensor design is interesting line of  future research.

\subsection{Image reconstruction by $\ell_1$-minimization}
\label{ssec:alg}

As presented in Section~\ref{sec:read-out}, the  depth image  is recovered from compressed read-outs  by first estimating   the differences
$\mathbf{p}^{(1)} - \mathbf{p}^{(3)}$ and $\mathbf{p}^{(4)}-\mathbf{p}^{(2)}$ from \eqref{eq:diff1} and \eqref{eq:diff2}, which  are  underdetermined systems of equations of the form $\mathbf{y} =\Mo \mathbf{x}$,
and then applying
\eqref{eq:phase}  to the estimated differences.    In this subsection we
present  how to efficiently solve these underdetermined systems using block-wise  $\ell_1$-minimization.

Suppose that the measurement matrix $\Mo\in\R^{m\times n}$ has a block diagonal form \eqref{eq:block}, with diagonal blocks $\Mo_k \in \R^{(m/K) \times (n/K)}$ operating  on a subset of pixels from individual lines.
This type of measurement matrices reflects the current  ToF camera architecture illustrated
in Figure~\ref{fig:adc}.  Assuming the sparsifying basis $\Base$  to be block diagonal with diagonal
blocks  $\Base_k \in \R^{(n/K) \times (n/K)}$, the full $\ell_1$-minimization problem
\eqref{eq:tikhonov} can be decomposed
into $K$ smaller $\ell_1$-minimization  problems of the form
\begin{equation}\label{eq:ell1block}
\mmin_{\z\in\R^{n/K}}\norm{ \Base_k^* \z}_1 \quad \text{subject to } \norm{\Mo_k \z_k - \y_k}_2\leq
\eta/K \,.
\end{equation}
Here
$\y_k \in \R^{m/K}$  are the data from
a single block. If  all $\Mo_k$ satisfy the $\Base$-RIP (i.e. $\Mo_k\Base_k$ satisfies the RIP), then   \eqref{eq:ell1block} stably and
robustly recovers any $\Base-$block-sparse vector $\mathbf{x} = \left[\mathbf{x}_1,\ldots,\mathbf{x}_K\right]$ with
$\mathbf{y} = \Mo \mathbf{x}$.
Theorem~\ref{thm:ourtheorem} shows that this, for example,
is the case if $\Mo_k$ are realized as random partial  circulant matrices.

By solving \eqref{eq:ell1block} we exploit sparsity within a single row-block. While one can expect some row-sparsity,
\eqref{eq:ell1block} does not fully exploit the level of sparsity present in two-dimensional images.
As shown in~\cite{Wolf}, using  row-sparsity yields artifacts in the  reconstructed image. In this work, we therefore
follow a different approach that is  described next. For that purpose, we consider an additional partition of all pixels
\begin{equation} \label{eq:part}
\{ 1, \ldots ,   n\}
= \bigcup_{\ell  = 1, \dots, n/b^2}  B_\ell \,,
\end{equation}
where $B_\ell$ corresponds to all indices in squared blocks of
size $b\times b$ with $b \triangleq n/K$. Then the measurement matrix can be written in the form
$\Mo= \Bo_1 \otimes \cdots \otimes \Bo_{n/b^2}$  with diagonal blocks
\begin{equation}\label{eq:Bell}
\Bo_{\ell}  \triangleq
\begin{pmatrix}
\Mo_\ell & \mathbf 0 & \ldots & \mathbf 0 \\
\mathbf 0 & \Mo_{\ell+b} & \ldots & \mathbf 0 \\
\vdots & & \ddots & \vdots \\
\mathbf 0 & \ldots & & \Mo_{\ell+(b-1)b}
\end{pmatrix} \in \R^{b^2 \times b^2}
\end{equation}
for $\ell = 1, \ldots, n/b^2$.
We further assume that the sparsifying basis $\Base  = \Base_1 \otimes \cdots \otimes \Base_{n/b^2}$   is
block diagonal with  $\Base_k \in \R^{b^2 \times b^2}$.
In such a situation,   \eqref{eq:ell1} can be decomposed into
$n/b^2$ smaller $\ell_1$-minimization  problems,
 \begin{equation}\label{eq:ell1block2}
\mmin_{\z_\ell \in\R^{b^2}}
\norm{\Base_\ell^* \z_\ell}_1
\quad \text{subject to } \norm{\Bo_\ell \z_\ell - \y_\ell}_2
\leq \varepsilon \,.
\end{equation}
The advantage of \eqref{eq:ell1block2} over \eqref{eq:ell1block} is that  $\Base_\ell$ can now be chosen as a two-dimensional wavelet or cosine transform instead of their one-dimensional analogous.  Two-dimensional wavelet or cosine transform   are well known to  provide
sparse representations of images.   In particular, the sparsity level relative to the number of
number of measurements is larger than in the  one-dimensional case. A similar argumentation 
applies when a TV is applied as sparsifying transform.

On the other hand, \eqref{eq:tikhonov}  is still decomposed into smaller
subproblems  which enables efficient numerical implementations.  The optimization problems
\eqref{eq:ell1block2} can be solved in parallel which further decreases
computation times. Using a global sparsifying transformation might be better in terms of sparsity,
but the resulting problem is less efficient to solve.

For the actual numerical implementation we use $\ell_1$-Tikhonov-regularization
\begin{equation}\label{eq:block_tikhonov}
\minimize_{\z_\ell \in\R^{b^2}}
\lambda \norm{\z_\ell}_1
+\norm{\tilde\Bo_\ell \z_\ell - \y_\ell}_2^2
\end{equation}
where $\tilde \Bo_\ell=\Bo_\ell\Base_l$ can be calculated by
$\transpose{(\transpose\Base_\ell\transpose\Bo_\ell)}$.
The two problems \eqref{eq:ell1block2}, \eqref{eq:block_tikhonov} are equivalent~\cite{grasmair2011necessary}
in the sense that every solution of
\eqref{eq:ell1block2} is also a solution of \eqref{eq:block_tikhonov} for
$\lambda$ depending on $\epsilon$ and vice versa.
For minimizing the unconstrained
$\ell_1$-problem \eqref{eq:block_tikhonov} we use the  fast iterative soft thresholding algorithm (FISTA)
introduced in~\cite{fista}, a very efficient splitting algorithm for $\ell_1$-type
minimization problems. For the numerical results, we also consider  global $\ell_1$-minimization,
where  we apply \eqref{eq:block_tikhonov} to the  complete phase difference images.

\subsection{TV-regularization}
\label{ssec:tv}

In many imaging applications, total variation (TV)-regularization~\cite{SchGraGroHalLen09} has been shown to
outperform wavelet based  $\ell^1$-minimization for compressed
sensing~\cite{krahmer2014stable,poon2015role,michaelTV17}.
Therefore, as an alternative approach for reconstructing the
blocks $\x_\ell$  of the phase difference images,   we implemented TV-regularization
\begin{equation}\label{eq:blockTV}
\minimize_{\z_\ell \in\R^{b^2}}
\mu \norm{\Do  \z_\ell}_1
+\norm{\Bo_\ell \z_\ell - \y_\ell}_2^2 \,.
\end{equation}
Here   $\Do \colon  \R^{b^2} \to (\R^{b^2})^2$ denotes the
discrete gradient operator and $\mu$ is the -regularization parameter.
For the numerical results we also considered  global TV-regularization,
where  we use \eqref{eq:blockTV} without partitioning  into  individual  blocks.

In order to numerically solve  \eqref{eq:blockTV}, we use the primal dual
algorithm  of Chambolle and Pock \cite{chambolle2011first}.
While the above theoretical results cannot be applied for  \eqref{eq:blockTV},
similar to other applications, we found TV to empirically yield better results
than $\ell^1$-minimization for ToF imaging.

\begin{figure}[htb!]
    \centering
    \includegraphics[width=0.75\columnwidth]{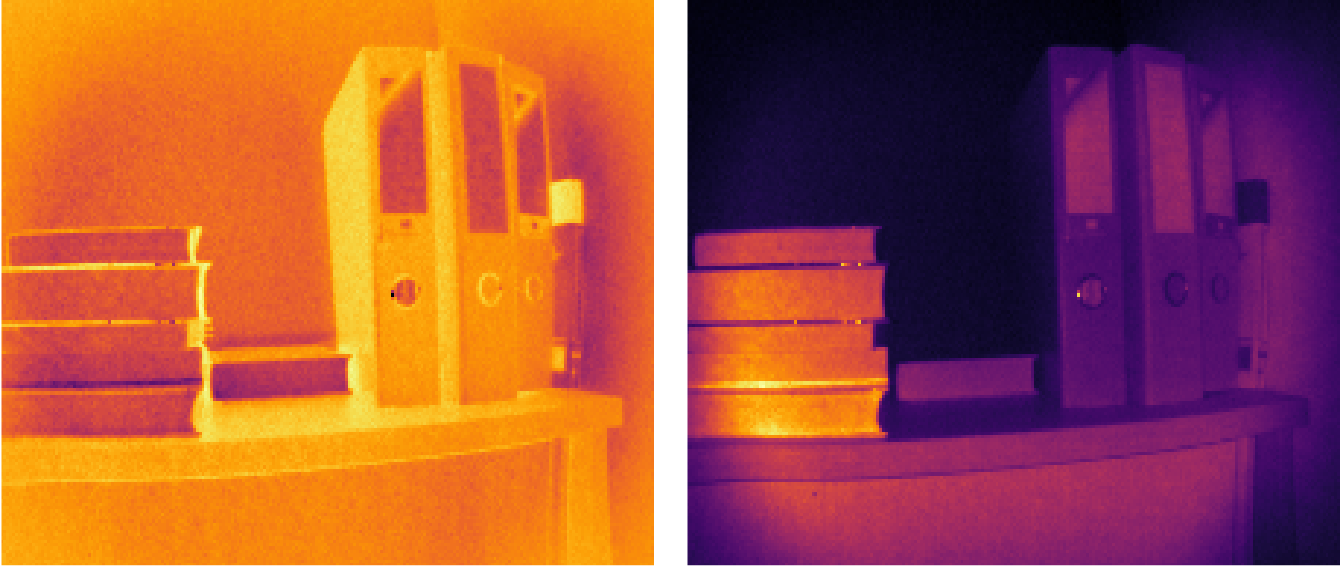}
    \caption{\textbf{Original phase difference images (of books scenery).}
    The four phase images are captured by the real  existing ToF camera and
    the two difference image are subsequently computed according to \eqref{eq:diff1}, \eqref{eq:diff2}.}
    \label{fig:phase1}
\end{figure}

\begin{figure}[htb!]
    \centering
    \includegraphics[width=0.6\columnwidth]{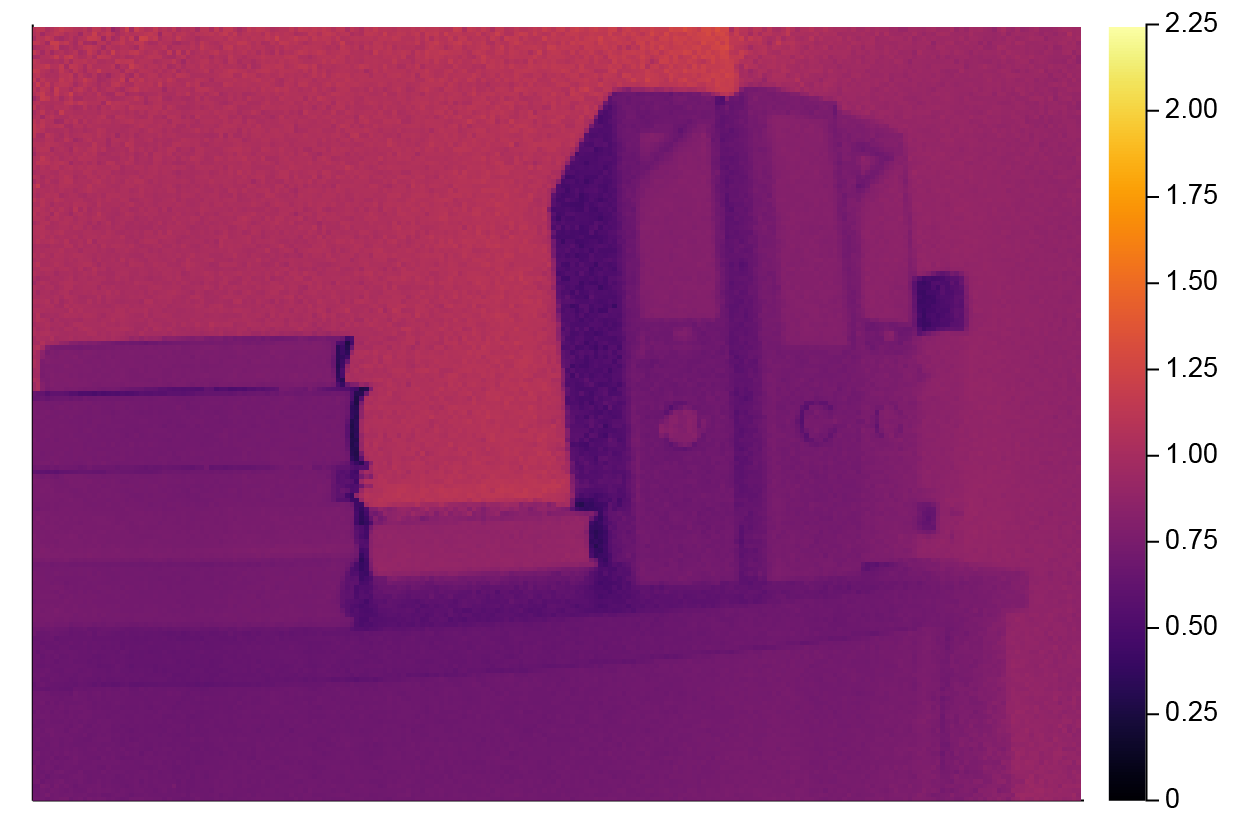}
    \caption{\textbf{Depth images.}
    The depth image is calculated from the two phase difference images shown in
     Figure~\ref{fig:phase1}.}
    \label{fig:depth1}
\end{figure}

\section{Experimental Results}
\label{sec:results}

In this section, we present some experimental results using raw data captured by an existing standard ToF camera.
An example of  such data (phase difference images of  books scenery) is shown in
Figure~\ref{fig:phase1}; the  depth image computed from the difference images via \eqref{eq:phase} is shown in Figure~\ref{fig:depth1}.
From the raw data of the standard ToF camera,  we generate the
compressive sensing measurements  synthetically.
For image reconstruction from compressed sensing data we use the
block-wise and global $\ell_1$-minimization \eqref{eq:block_tikhonov}
as well as block-wise and global TV-regularization\eqref{eq:blockTV}.

\subsection{Compressed ToF sensing}

For compressive ToF sensing, we initialized the measurement matrices $\Mo$ (the block
circulant matrices; see
Section~\ref{ssec:circulant}) randomly with the entries of a random vector generating the
partial circulant blocks taking values in $\{-1,1,0\}$ with equal probability. The blocks have size
$m\times 14$ which implies that the compression ratio is $14/m$.
In the experiments we observed that
usually not all blocks of our measurement matrix yield adequate reconstruction
properties.
This indicates that the size of the single blocks is not large enough to
guarantee recovery in each block with high probability. Using bigger blocks would
overcome this issue (according to Theorem~\ref{thm:ourtheorem}), but this is not
possible with our camera design. We therefore propose the following  alternative strategy.
We start with a set of several candidates for the blocks of the measurement matrix
from which we choose the ones with the lowest reconstruction error on a set of test images.

For the following results we have chosen the parameters in the FISTA for  $\ell_1$-minimization and
the TV algorithm by hand and did not perform extensive parameter optimization.
On most images the parameter choice had a moderate influence on the reconstruction error.
Thus we used $\lambda=0.05$ and $\mu =0.1$ for all
presented results. For the basis $\Base$ we use the 2D-Haar wavelet
transform and, as described in Section~\ref{ssec:alg}, we executed the reconstruction block
wise with a  block size of $28\times 28$. Additionally, for $\ell_1$-minimization as well as TV-regularization we
performed global reconstruction corresponding to a single block.
We use  300 iterations for the block wise $\ell_1$-minimization,  1000 for the
global  $\ell_1$-minimization, 100 for block wise TV, and 300 for global TV.

To measure the error between the uncompressed depth image
$\mathbf{d}\in\R^{n_1\times n_2} = \R^{168\times 224}$ and the
reconstructed depth image $\mathbf{d}_{\rm rec}\in\R^{168\times 224}$ we use
the relative mean absolute error (RMAE) and the peak signal to noise ratio (PSNR)
defined by
\begin{equation*}
    \begin{aligned}
        \text{MAE}(\mathbf{d},\mathbf{d}_{\rm rec}) &\triangleq \frac{1}{n_1 n_2}
        \sum_{i=1}^{n_1}\sum_{j=1}^{n_2} \abs{d_{i,j} -(d_{\rm rec})_{i,j}}\\
        \text{RMAE}(\mathbf{d},\mathbf{d}_{\rm rec}) &\triangleq
        \frac{\text{MAE}(\mathbf{d},\mathbf{d}_{\rm rec})}{\max_{i,j}\abs{d_{i,j}}} \, \SI{100} {\percent}\\
        \text{PSNR}(\mathbf{d},\mathbf{d}_{\rm rec}) &\triangleq 10 \log \left(
        n_1n_2 \frac{\max_{i,j}
        (d_{i,j}^2)}{\sum_{i,j} (d_{i,j}-(d_{\rm rec})_{i,j})^2} \right)\SI{}{\decibel}\,.
    \end{aligned}
\end{equation*}

\begin{figure}[htb!]
    \centering
    \includegraphics[width=0.75\columnwidth]{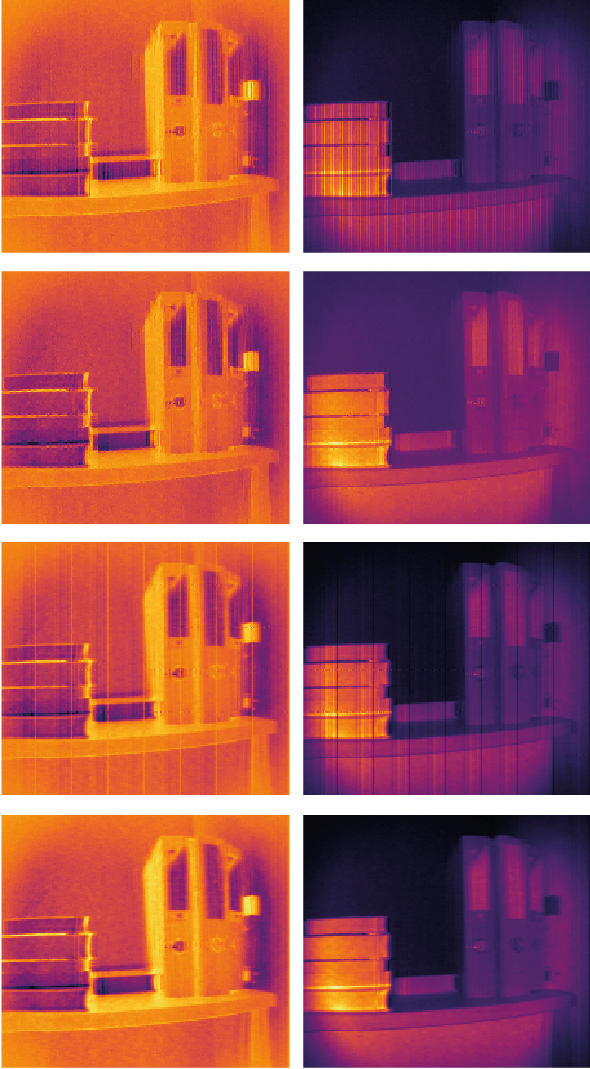}
    \caption{\textbf{Reconstructed phase differences with a
        compression ratio of $2$.}
        First row: Block-wise  $\ell_1$-minimization.
        Second row: Global  $\ell_1$-minimization.
        Third row:  Block-wise TV-regularization.
        Fourth row: Global TV-regularization.
        The original phase differences are shown in Figure~\ref{fig:phase1}.}
    \label{fig:phase-recon}
\end{figure}

\begin{figure}[htb!]
    \centering
    \includegraphics[width=\columnwidth]{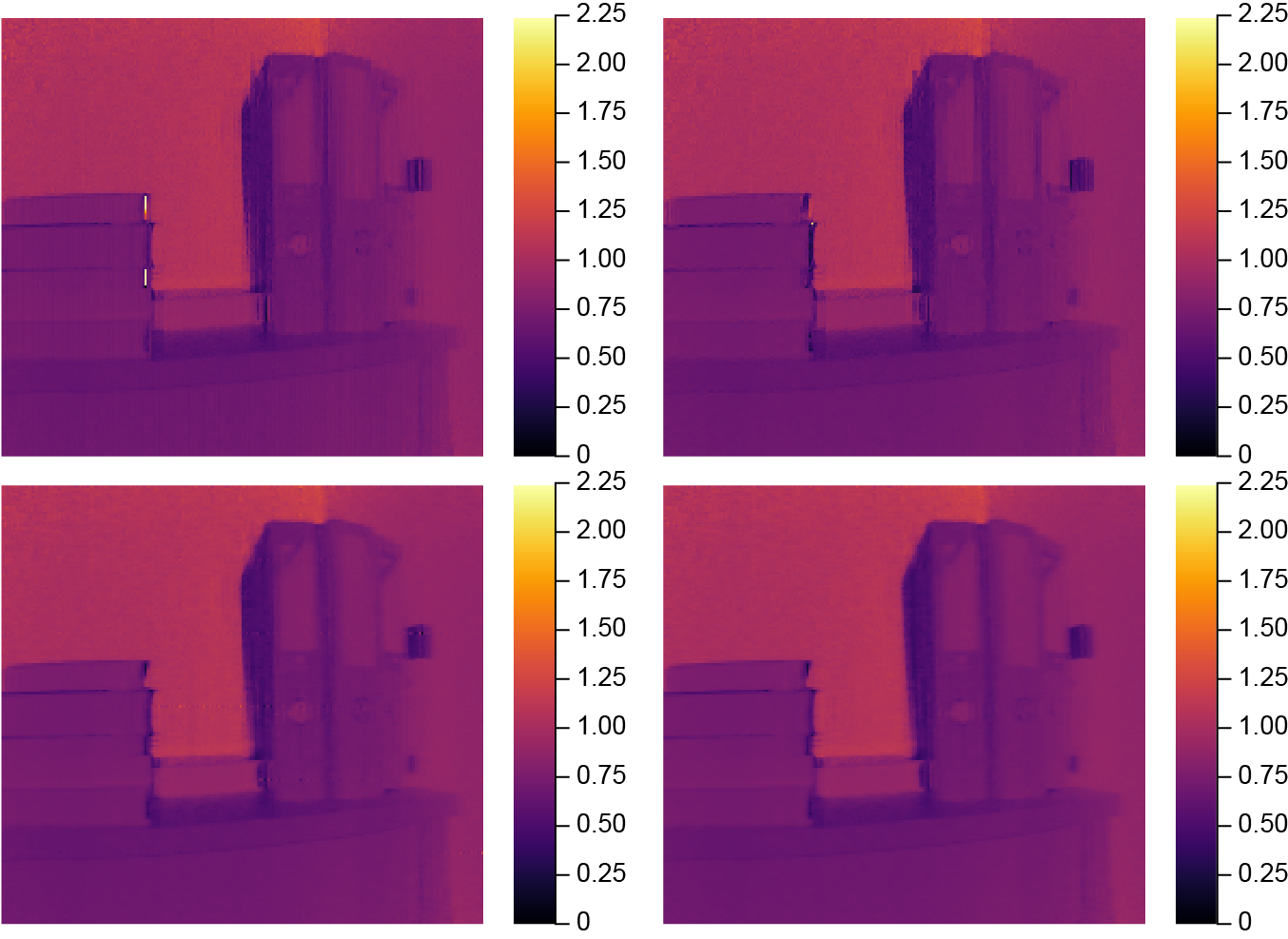}
    \caption{\textbf{Reconstructed depth images with a
        compression ratio of $2$.}
        Top left:  Block-wise $\ell_1$-minimization.
        Top  right: Global $\ell_1$-minimization.
        Bottom  left: Block-wise  TV-regularization.
        Bottom right: Global TV-regularization.
        The original depth image is shown in Figure~\ref{fig:depth1}.}
    \label{fig:depth-recon}
\end{figure}

\subsection{Numerical results}

For the first set of experiments, we consider phase difference images of a scenery with a couple of books
and folders shown in Figure~\ref{fig:phase1}, which is less than 1.2 meters away from
the camera (see Figure~\ref{fig:depth1}).
The  measured phase images  have been calibrated by subtracting
a reference image of constant distance (see  \cite{conde2017compressive} for additional calibration techniques).
Figure~\ref{fig:phase-recon} shows the reconstructed phase difference images from
compressed sensing data using a compression ratio of $2$
using block-wise and global $\ell_1$-minimization \eqref{eq:block_tikhonov}
and block-wise and global TV-regularization~\eqref{eq:blockTV}.
The corresponding depth reconstructions are shown in
Figure~\ref{fig:depth-recon}.

\begin{figure}[htb!]
    \centering
    \includegraphics[width=0.75\columnwidth]{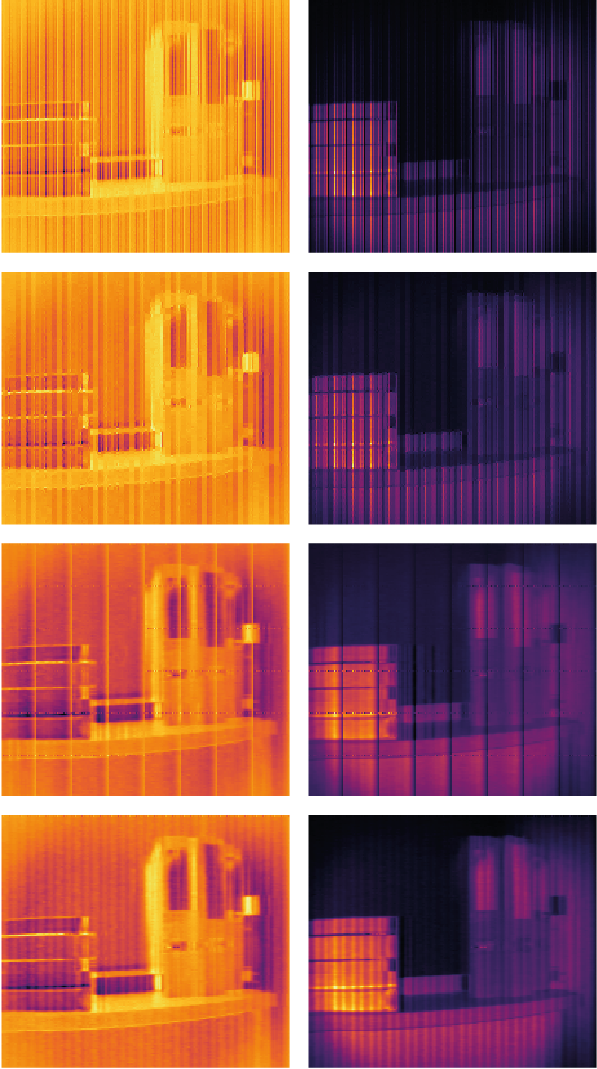}
    \caption{\textbf{Reconstructed phase differences with a
        compression ratio of $4.7$.}
        First row: Block-wise $\ell_1$-minimization.
        Second row: Global $\ell_1$-minimization.
        Third row:  Block-wise TV-regularization.
        Fourth row: Global TV-regularization.
        The original phases are shown in Figure~\ref{fig:phase1}}
    \label{fig:phases-lim}
\end{figure}

\begin{figure}[htb!]
    \centering
    \includegraphics[width=\columnwidth]{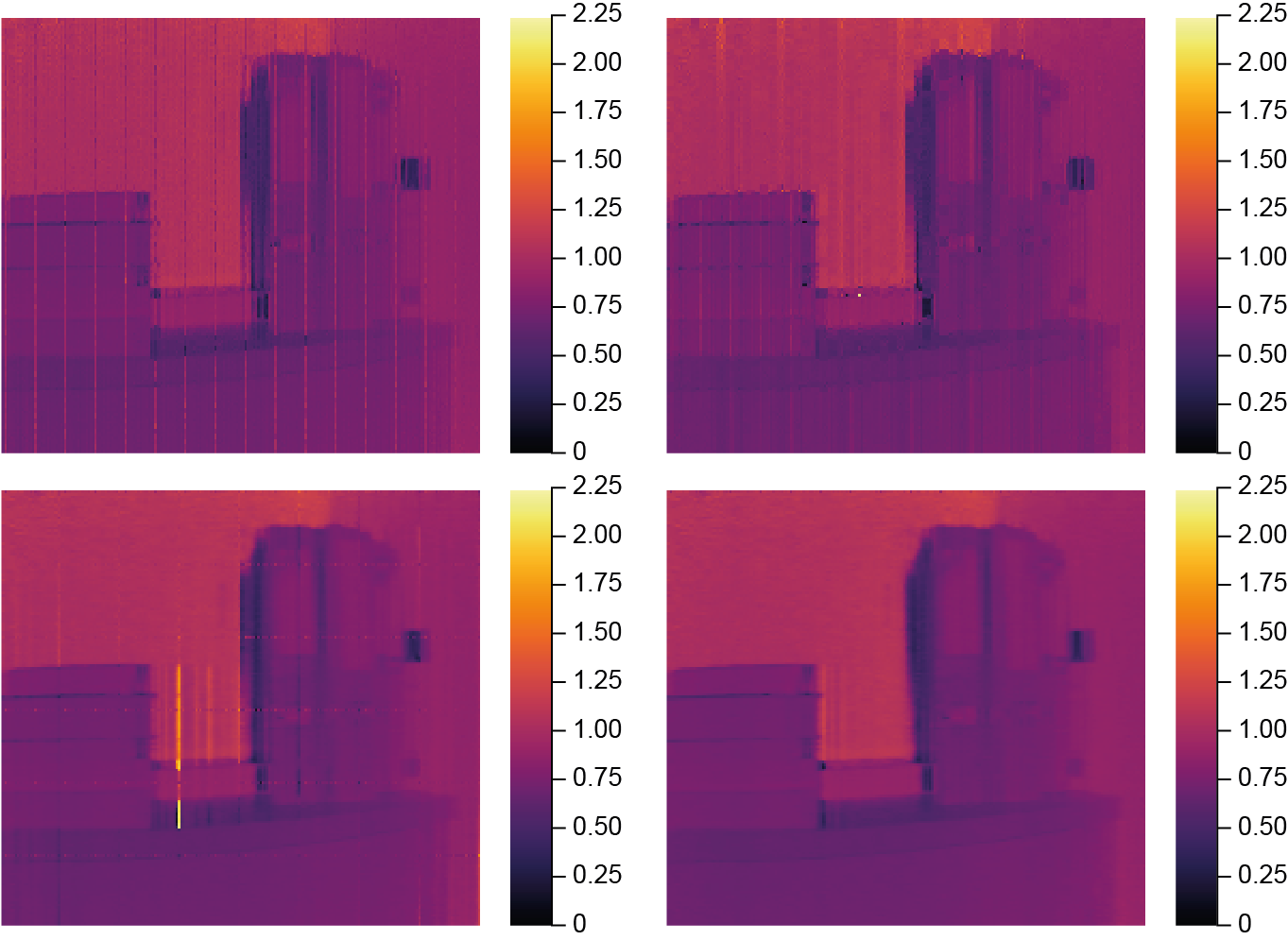}
    \caption{\textbf{Reconstructed depth images with a
        compression ratio of $4.7$.}
        Top left:  Block-wise $\ell_1$-minimization.
        Top  right: Global $\ell_1$-minimization.
        Bottom  left: Block-wise  TV-regularization.
        Bottom right: Global TV-regularization.
        The original depth image is shown in Figure~\ref{fig:depth1}.}
    \label{fig:depth-lim}
\end{figure}

To investigate the reconstruction quality when only using a very small amount
of data, we generated a measurement matrix with $m=3$. This results in a compression ratio 
$14/3$ of around $4.7$. In this example, we also increased the probability for zeros to $2/3$ and the resulting matrix had around $\SI{57}{\percent}$ zeros.
This means that the images can be captured very quickly since zero entries in the measurement matrix imply that the camera can skip the corresponding pixel.
The reconstructed  phase difference images using block-wise and global $\ell_1$-minimization and block-wise and global TV-regularization~\eqref{eq:blockTV} are shown in
Figure~\ref{fig:phases-lim} and the corresponding depth reconstructions
are shown in  Figure~\ref{fig:depth-lim}.

\subsection{Discussion}

Inspection of Figures~\ref{fig:phase-recon} and \ref{fig:depth-recon}
shows  that for a compression ratio  of $2$  all
reconstruction methods perform well. Opposed to that, for the
compression  ratio  $4.7$,  global TV-regularization clearly performs
best.  In a more quantitative way, this is demonstrated in Table~\ref{tab:errors},
where the  RMAE and PSNR are shown for depth image reconstructed
with the various reconstruction methods. From  that table, we see that
global TV-regularization in any case  yields the smallest  RMAE and largest PSNR.
For example, for the  compression ratio of 4.7,  global TV-regularization yields a
PSNR of 31.5, opposed to a PSNR of 26.4, 28.0 and 27.3
for block wise $\ell^1$-minimization,  global $\ell^1$-minimization
and block wise TV-regularization.

To investigate the dependence of the reconstruction error on the compression
ratio more closely,  we perform a series of compressed sensing  measurements and
reconstructions using  a set of 24  test images for
various compression ratios. The sceneries consist of the  books-image and other similar sceneries
captured with the ToF camera in an office and an apartment. In Figure~\ref{fig:error}, we show the
resulting average  RMAE and PSNR using   block-wise and global $\ell_1$-minimization  and
block-wise and global TV-regularization. For all test images we  use the same FISTA  and TV
 parameters  as above. In any case, global TV-regularization
 clearly outperforms  the other reconstruction methods.
This is especially prominent in the case  of a small number of measurements.
Summarizing these findings  we can clearly
recommend  global TV-regularization among the tested sparse recovery algorithms
for recovering the depth image from the proposed ToF camera design. 
Using tools such as dictionary learning or deep learning  to find optimal sparsifying 
transforms is an interesting line of future research.

 The reconstruction times for all method are comparable:
 On a laptop  with CPU Intel i5-3427U @ 1.80GHz,
 performing 100 iterations   took about
 $\SI{0.27}{s}$  for block-wise $\ell_1$,
 $\SI{0.42}{s}$  for global $\ell_1$,
$\SI{0.25}{s}$  for block-wise  TV, and
$\SI{0.33}{s}$ for global TV.  Finally,  note that we also
tested to separately recover the four phase images, which  we found to  yield
similar  results as for the difference images. Since the latter requires only half of the numerical
computations we suggest using sparse recovery of the  difference images
 when computational resources are limited.

\begin{table}
\centering
    \begin{tabular}{|c|c|c|c|}
       \toprule
        Method  & RMAE $(\SI{}{\percent})$ & PSNR ($\SI{}{\decibel}$) & CR \\
        \midrule
         FISTA block     & 1.3 & 29.8 & 2 \\
         FISTA global    &    1.2 & 32.9 & 2 \\
         TV block          &  1.0 & 34.4 & 2 \\
         TV global        & 0.9 & 34.9 & 2 \\
         \midrule
         FISTA block     &  2.7 & 26.4 & 4.7  \\
         FISTA global    &  2.3 & 28.0 & 4.7  \\
         TV block          &  1.8 & 27.3 & 4.7    \\
         TV global       &  1.4 & 31.5 & 4.7 \\
         \bottomrule
    \end{tabular}
    \caption{Reconstruction errors and signal to noise ratios
    for reconstructing the book depth image with compression ratio of
    $2$ and  $4.7$, respectively.}
    \label{tab:errors}
\end{table}

\begin{figure}[htb!]
    \centering
    \includegraphics[width=0.49\columnwidth]{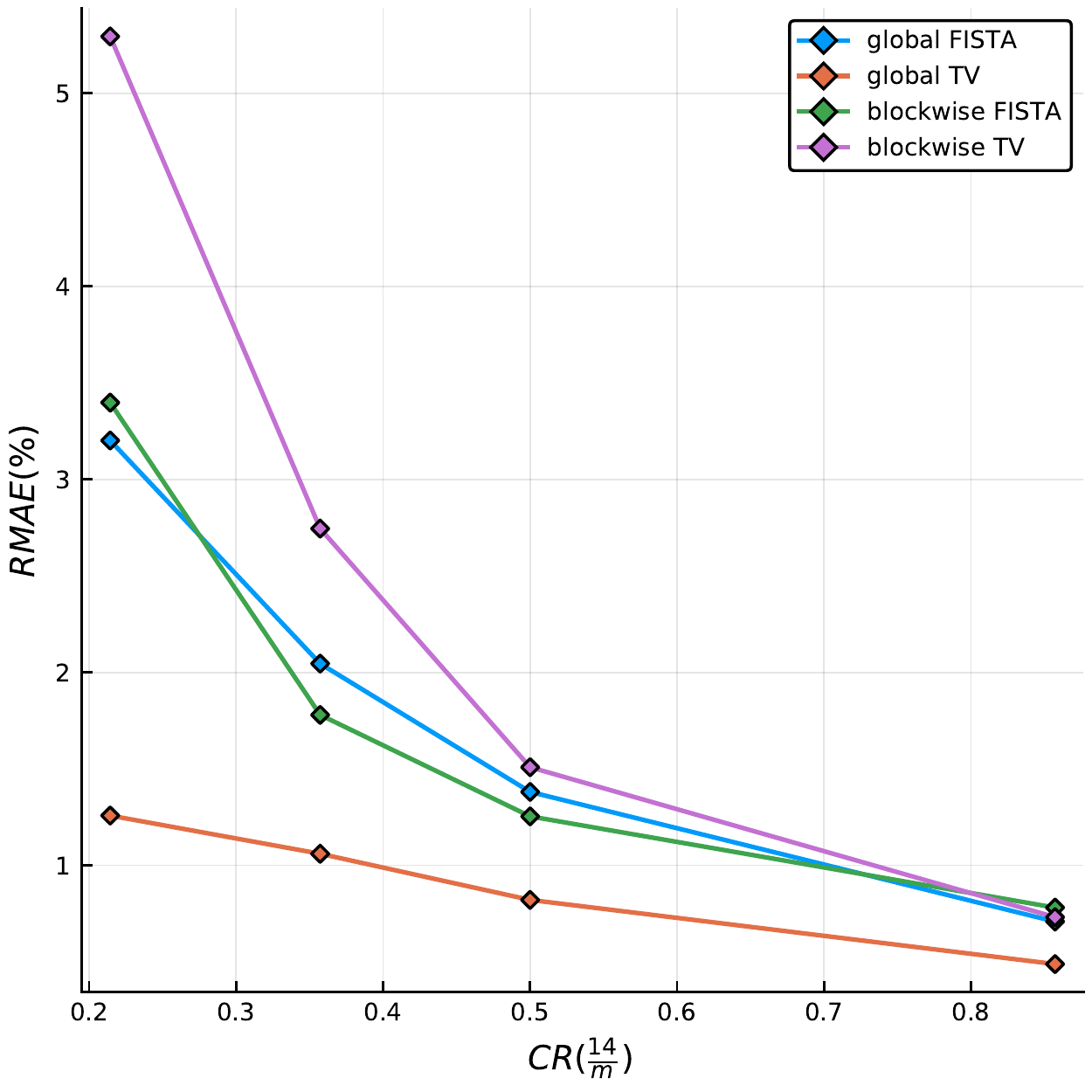}
    \includegraphics[width=0.49\columnwidth]{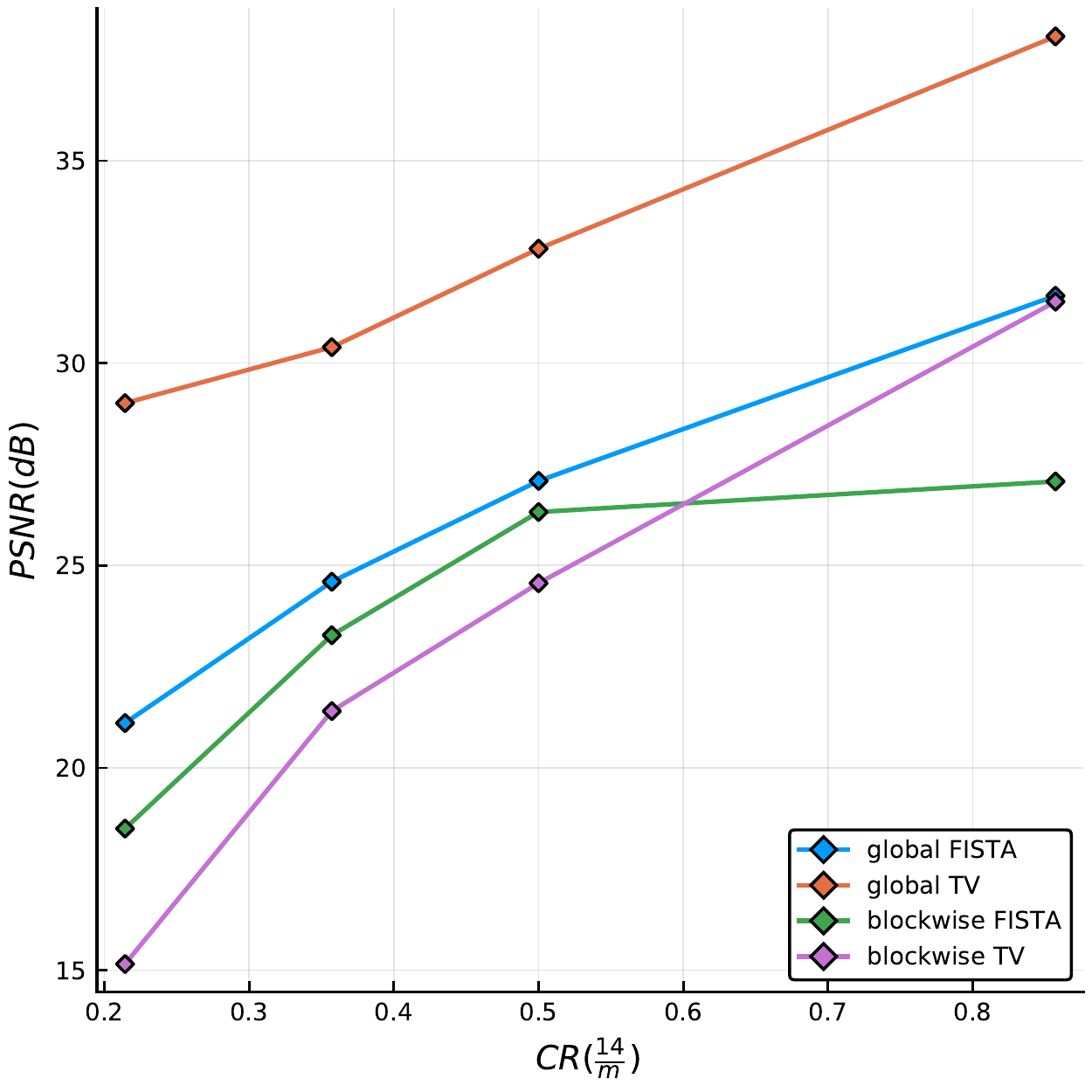}
\caption{  \textbf{Average error analysis.}
Left: Relative MAE error of the reconstructed depth images
    depending on the compression ratio $16/m$.
Right: Same as on left, but now using the PSNR for evaluating the reconstruction quality.} \label{fig:error}
\end{figure}

\section{Conclusion}

In this paper, we proposed a compressive ToF camera design that reduces the required amount of data to be read-out and
transferred. The proposed compressed ToF camera uses measurements within rows of the image which
yields a block-diagonal measurement matrix. Random partial circulant matrices as diagonal
blocks have
been shown to be compatible with current camera architecture. Their asymptotic recovery
guarantees do not directly apply in small block sizes. One can increase the block size, which is not really practical for
the ToF camera.  To overcome this issue, we proposed and implemented
strategy to increase the compressed sensing ability of the random partial circulant matrices.
Our experimental results clearly demonstrate that it is possible to
recover the original images from small measurement blocks.

For image reconstruction we used either blocks-wise or global reconstruction,   that both allow to exploit
the sparsity of the phase images in the two dimensional  wavelet basis ($\ell_1$-minimization), or
sparsity of the two dimensional gradient (TV-regularization).
We empirically  found TV-regularization to outperform  the wavelet-based $\ell_1$-reconstruction which a comparable numerical effort.
Future work will  be done to further improve the image quality and to
increase the reconstruction speed. Among others,  for that purpose, we will
investigate the use of machine learning in compressed sensing~\cite{mousavi2017learning}.

\section*{Acknowledgement}

S. Antholzer  and M. Haltmeier acknowledge support of the Austrian Science Fund (FWF),
project number 30747-N32. The work of M. Sandbichler has been supported by the
FWF, project number Y760.

\end{document}